\documentclass[11pt]{amsart}  
\usepackage{amssymb}  
\usepackage{amsthm}
\usepackage{amscd}
\usepackage{mathrsfs}
\usepackage{amsfonts}
\usepackage{amsmath}
\usepackage{graphicx}
\usepackage{pinlabel}
\usepackage{enumerate}



\DeclareMathOperator{\tr}{Tr}



\newtheorem{Thm}{Theorem}

\newtheorem{Lem}[Thm]{Lemma}
\newtheorem{Cor}[Thm]{Corollary}

\theoremstyle{definition}
\newtheorem{Def}[Thm]{Definition}

\theoremstyle{remark}

\theoremstyle{definition}




\begin{document}  

\author{Diana Hubbard}
\title[On SKH and Axis-Preserving Mutations]{On Sutured Khovanov Homology and Axis-Preserving Mutations}

\thispagestyle{empty}

\begin{abstract}This paper establishes that sutured annular Khovanov homology is not invariant for braid closures under axis-preserving mutations. This follows from an explicit relationship between sutured annular Khovanov homology and the classical Burau representation for braid closures.   
\end{abstract}

\maketitle

\section{Introduction}

Given a link $L$ embedded in $S^{3}$, locate a sphere $C \subset S^{3}$, called a Conway sphere, such that $C$ is transverse to $L$ and $|C \cap L|=4$.  

\begin{Def}\label{Definition 1} A \textbf{mutation} of $L$ is obtained as follows: cut along $C$, rotate 180 degrees about an axis disjoint from $C \cap L$ that preserves $C \cap L$ setwise, and reglue $C$ to produce a new link $L'$. 
\end{Def}

The links $L$ and $L'$ are said to be mutants.  Knot and link invariants often have difficulty distinguishing mutant knots and links.  For example, the Jones, Alexander, and HOMFLY polynomials are invariant under mutation (see \cite{kanenobu1995homfly}).  Many results are now known about the behavior of Khovanov homology under mutation, though it is still open whether Khovanov homology with $\mathbb{Z}$ coefficients distinguishes mutant knots.  Khovanov homology with $\mathbb{Z}/2\mathbb{Z}$ coefficients was shown to be invariant under mutation by Bloom (\cite{bloom2009odd}) and independently by Wehrli, and Khovanov homology with $\mathbb{Z}$ coefficients was shown by Wehrli to generally detect mutations that switch components of a link (\cite{wehrli2003khovanov}).  

Sutured annular Khovanov homology (hereafter referred to as sutured Khovanov homology for simplicity, and denoted $SKh$) is a triply graded Khovanov type invariant for knots or links embedded in a thickened annulus $A \times I$.  It is described in more detail in Section 2.  In this setting, we will be interested in a specific type of mutation that we call axis-preserving. Given a link $L \subset A \times I \subset S^{3}$, denote the point at the center of the annulus $A$ as $z$. 

\begin{Def}\label{axis-preserving} An \textbf{axis-preserving mutation} is a mutation as in Definition \ref{Definition 1} such that the axis of rotation  contains the line segment $z \times I$.
\end{Def}

\textbf{Remark:} In general, axis-preserving mutations may change the isotopy class of the link in $A \times I$ while preserving the isotopy class in $S^{3}$.  Indeed, all of the examples presented in this paper are of this type.

It is an observation of Wehrli that sutured Khovanov homology is not invariant under axis-preserving mutation for knots.  His example is shown in Figure \ref{wehrli}, where the dotted circle denotes the intersection of the Conway sphere with the annulus.  After undoing the trivial kink in both knots, we see that this mutation switches a negatively stabilized unknot with a positively stabilized unknot. A quick calculation yields that these two have distinct $SKh$.

\begin{figure}[ht!]
\labellist
\small\hair 2pt

\endlabellist
\centering
\includegraphics[scale=0.65]{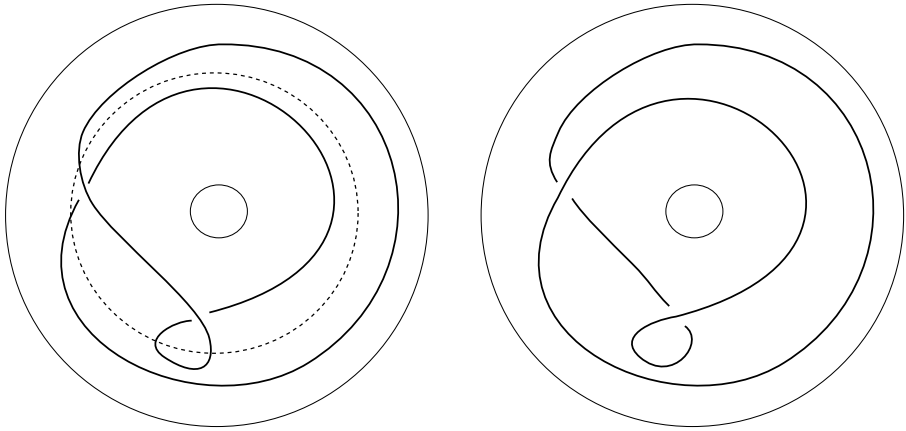}
\caption{Wehrli's example}\label{wehrli}
\end{figure}

This example shows that in the annular setting mutation is potentially a strong move on knots, since adding trivial kinks allows us to switch crossings.  Hence it is perhaps not surprising that $SKh$ can distinguish annular knots or links under axis-preserving mutation.  It is natural to ask how sutured Khovanov homology behaves under axis-preserving mutation on  braids, where such trivial kinks are not allowed. Indeed, $SKh$ is a natural tool for studying braids, as it is by construction a conjugacy class invariant.  Baldwin and Grigsby have shown that $SKh$ can detect the trivial braid among braid closures (\cite{baldwin2012categorified}), and Grigsby and Ni have shown that $SKh$ can distinguish braids from other tangles (\cite{grigsby2013sutured}).  

Throughout this paper we assume that braids are embedded in the natural way in $A \times I$: that is, the axis from Definition \ref{axis-preserving} is precisely the braid axis.  Exchange moves and flypes on closed braids are special cases of axis-preserving mutations (see Figure \ref{exchange_flype}: the x  represents the braid axis, and the $w$ stands for $w$ strands).

\begin{figure}[ht!]
\labellist
\small\hair 1pt
\pinlabel $Q$ at 48 94
\pinlabel $w$ at 9 90
\pinlabel $P$ at 48 8
\pinlabel $Q$ at 185 94
\pinlabel $w$ at 145 90
\pinlabel $P$ at 185 9
\pinlabel $Q$ at 279 75
\pinlabel $P$ at 279 27
\pinlabel $R$ at 347 52
\pinlabel $Q$ at 417 75
\pinlabel $P$ at 417 27
\pinlabel \rotatebox[origin=c]{180}{$R$} at 433 53

\endlabellist
\centering
\includegraphics[scale=0.72]{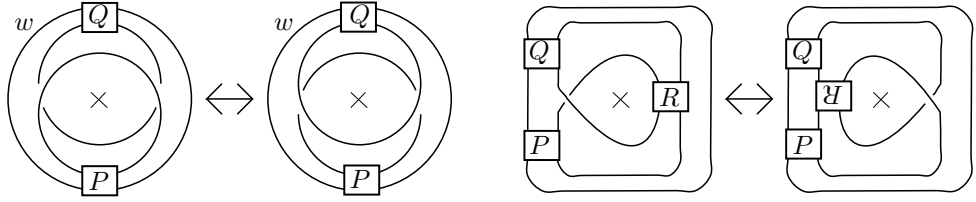}
\caption{Exchange moves (left) and negative flypes (right)}\label{exchange_flype}
\end{figure}


The main result of this paper is:

\begin{Thm}\label{mutation}
The sutured Khovanov homology of a closed braid is not invariant under an axis-preserving mutation.  Indeed, there exist infinite families of mutant 4-braid pairs and mutant 5-braid pairs, shown in Figures \ref{morton} and \ref{menasco}, whose sutured Khovanov homologies differ.
\end{Thm}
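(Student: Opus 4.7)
The plan is to extract from $SKh$ a polynomial-valued invariant that is computable from the Burau representation, and then exhibit mutant pairs of braids whose Burau-derived invariants differ. First, I would establish the explicit relationship alluded to in the abstract: for an $n$-braid $\beta$ with closure $\widehat{\beta}$, the graded Euler characteristic of $SKh(\widehat{\beta})$ in a designated annular (wrapping-number) grading recovers, up to a fixed monomial normalization, the characteristic polynomial $\det(tI - \psi_n(\beta))$ of the reduced Burau matrix $\psi_n(\beta)$. This type of identity is natural from the cube-of-resolutions construction of $SKh$: restricting to the top (or next-to-top) annular grading isolates the resolutions whose trace-like count on the annulus reproduces the Burau matrix, and tracking the bigrading recovers the variable $q$ of the Burau parameter. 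I would prove this by induction on crossings, checking that each Markov/braid generator acts on the relevant graded piece by the elementary Burau matrix, and handling the closure operation as a trace.

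With this relationship in hand, the theorem reduces to producing infinite families of axis-preserving mutant braid pairs whose Burau characteristic polynomials differ. For this I would exploit the families shown in Figures \ref{morton} and \ref{menasco}: the Morton family of $4$-braids and the Menasco family of $5$-braids are built from exchange moves or flypes (the two axis-preserving moves pictured in Figure \ref{exchange_flype}), performed on tangle blocks $P$, $Q$, $R$ that can be chosen to depend on integer parameters (e.g.\ the number of full twists in a prescribed box). I would compute $\psi_n$ on each building block, write the two mutant braids as different products of these blocks, and show that their characteristic polynomials disagree for all but finitely many values of the parameter. The conclusion is then immediate: because the Burau characteristic polynomial is determined by $SKh$, a discrepancy in the polynomial forces a discrepancy in $SKh$.

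The main obstacle is the first step, namely pinning down the exact annular grading and normalization in which the graded Euler characteristic of $SKh$ equals the Burau polynomial, and verifying this identity in full generality. Choosing the wrong grading gives only the Jones polynomial, which is mutation-invariant and therefore useless here; the point is to work one level below the top wrapping grading, where the Burau action on the $n-1$ dimensional reduced representation is visible. A secondary obstacle is making the Burau calculation for the Morton and Menasco families uniform in the parameter: rather than verifying difference by brute force, I would identify the eigenvalue data of $\psi_n$ on the twisting tangle block and show that exchange or flype swaps in the Conway sphere change the symmetric functions of those eigenvalues in a controlled way, yielding distinct characteristic polynomials for all parameter values outside a small exceptional set.
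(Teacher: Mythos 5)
Your overall architecture matches the paper's: isolate a single annular grading of $\chi_{SKh}$, identify it with classical Burau data of the braid, and then distinguish the mutant families by a Burau computation that is uniform in the twisting parameter. But the key identity you propose to prove in step one is false as stated, and this is a genuine gap. A single annular grading of $\chi_{SKh}(\overline{\beta})$ is a Laurent polynomial in $q$ times a fixed power of the annular variable; it cannot encode the characteristic polynomial $\det(tI-\psi_n(\beta))$, which is a degree-$(n-1)$ polynomial in an auxiliary variable with coefficients in $\mathbb{Z}[q^{\pm 1}]$. What the $k=n-2$ graded piece actually recovers (and what the paper proves) is the \emph{trace} of the \emph{unreduced} Burau matrix at $t=q^{2}$, up to the normalization $(qt)^{n-2}q^{n_{+}-n_{-}}$: the $[n-2]$ weight-space block of the Reshetikhin--Turaev matrix of each Artin generator is, up to a scalar and a conjugation independent of the generator, the unreduced Burau matrix of that generator, and closing the braid corresponds to taking the trace of the product. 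Recovering the full characteristic polynomial would require relating the lower weight-space blocks to exterior powers of Burau, which is a substantially harder statement and is not needed here.

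The distinction matters for your second step as well. Several pieces of the Burau characteristic polynomial are mutation invariant---the determinant is determined by the exponent sum, and the specialization governing the Alexander polynomial is a classical mutation invariant---so ``the characteristic polynomials disagree'' must in practice be certified by exhibiting a non-invariant coefficient, and the accessible one is precisely the trace. Once you replace ``characteristic polynomial of the reduced Burau matrix'' by ``trace of the unreduced Burau matrix,'' your plan goes through and is essentially the paper's: since exchange moves and flypes preserve the exponent sum, the prefactor $q^{n_{+}-n_{-}}$ agrees for the two braids in each mutant pair, so differing traces force differing $SKh$ (Corollary \ref{exponent}); the paper then computes the trace for the $4$-braid family with the Burau variable kept general (the top-degree terms differ for all $k\geq 0$) and for the $5$-braid family at the specialization $t=-1$ (the traces are $6+8k+16k^{2}$ versus $6-8k+16k^{2}$, distinct for $k\geq 1$). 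Your suggestion to track eigenvalue data of the twist block is a reasonable way to make the computation uniform in $k$, but the specialization trick is simpler.
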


\begin{figure}[ht!]
\labellist
\small\hair 2pt
\pinlabel $\sigma_{1}^{k}$ at 175 314

\endlabellist
\centering
\includegraphics[scale=0.65]{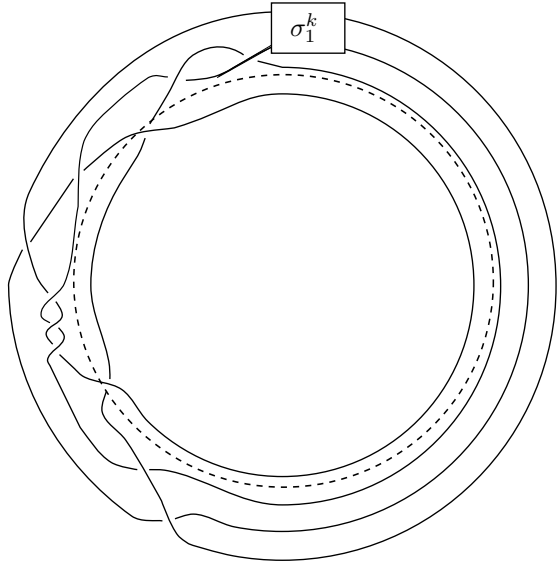}
\caption{Infinite family of 4-braid mutants whose $SKh$ differ; $k$ is an integer $\geq 0$}\label{morton}
\end{figure}

\begin{figure}[ht!]
\labellist
\small\hair 2pt
\pinlabel $\sigma_{1}^{k}$ at 186 270
\pinlabel $\sigma_{1}^{-k}$ at 188 16
\endlabellist
\centering
\includegraphics[scale=0.65]{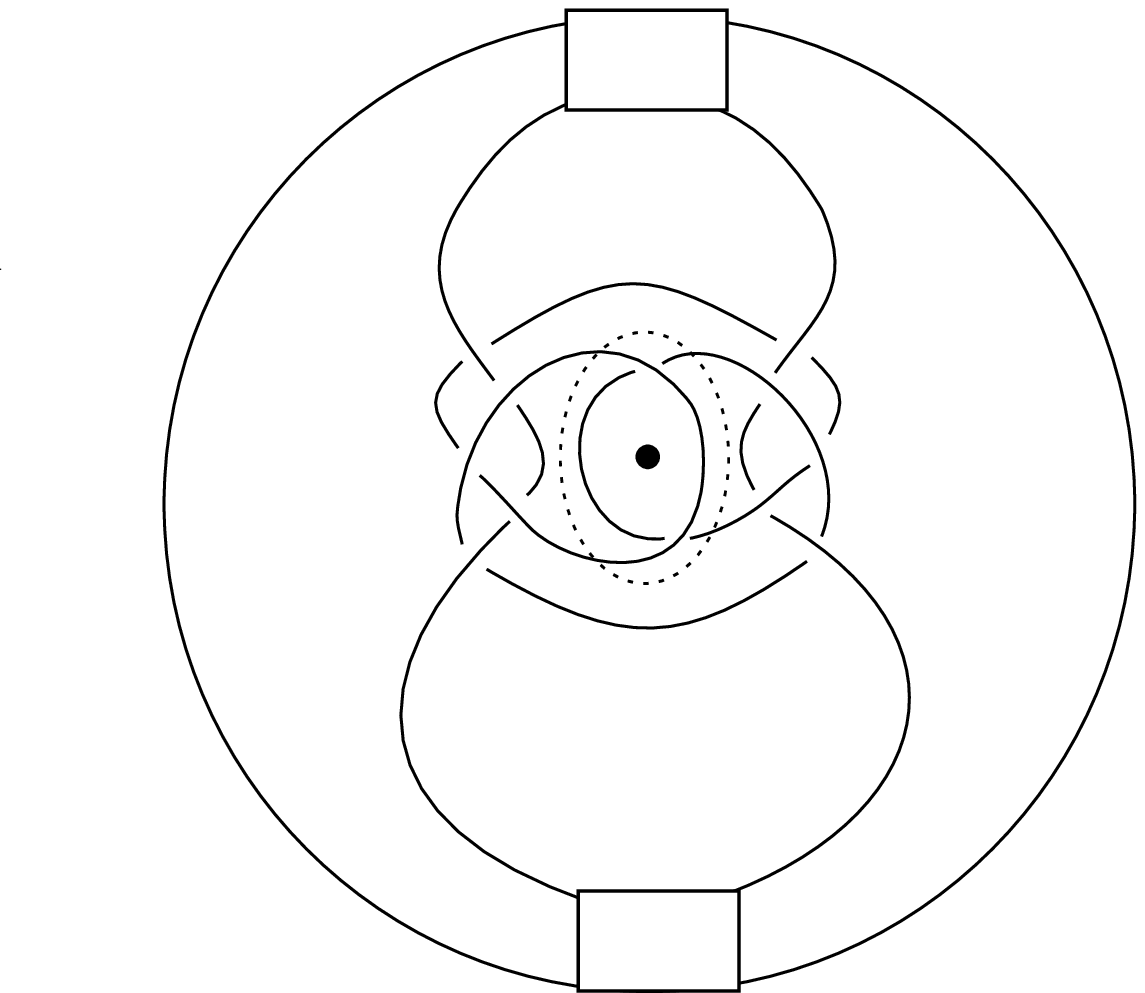}
\caption{Infinite family of 5-braid mutants whose $SKh$ differ; $k$ is an integer $\geq 0$}\label{menasco}
\end{figure}

Again, the dotted circles in Figures \ref{morton} and \ref{menasco} represent the intersection of the Conway sphere with the annulus.

\textbf{Note:} The mutant pairs in Theorem \ref{mutation} are related by exchange moves.  Sutured Khovanov homology can also distinguish mutants that are related by negative flypes. In particular, the pair
$$ \sigma_{3}^{2}\sigma_{2}^{2}\sigma_{3}^{-1}\sigma_{1}^{2}\sigma_{2}\sigma_{1}^{-1} \,\, \text{and} \,\, \sigma_{3}^{2}\sigma_{2}^{2}\sigma_{3}^{-1}\sigma_{1}^{-1}\sigma_{2}\sigma_{1}^{2}$$
from  \cite{ng2008transverse}, both representing the knot $7_{2}$ and related by a negative flype, are distinguished by $SKh$.  In addition, the pair
 $$\sigma_{3}\sigma_{2}^{-2}\sigma_{3}^{2}\sigma_{2}\sigma_{3}^{-1}\sigma_{1}^{-1}\sigma_{2}\sigma_{1}^{2} \,\, \text{and} \,\, \sigma_{3}\sigma_{2}^{-2}\sigma_{3}^{2}\sigma_{2}\sigma_{3}^{-1}\sigma_{1}^{2}\sigma_{2}\sigma_{1}^{-1}$$ from \cite{khandhawit2010family} (see also \cite{ng2008transverse}), both representing $10_{132}$ and related by a negative flype, are also distinguished by $SKh$. 
 
It is worth emphasizing here that $SKh$ is \textit{not} a transverse invariant (since in general it is not preserved under positive stabilization).  So in particular, we cannot conclude from this calculation that the pairs mentioned above are transversely non-isotopic. 

The example shown in Figure \ref{morton} appears in \cite{birman1992studying} with $k=0$ (building on work of Morton in \cite{morton1983irreducible}) as the intermediate and third braid in a series of three braids related by exchange moves, where the first does not admit a destabilization and the third does.  The example shown in Figure \ref{menasco} was suggested by Menasco.

Given a braid $\beta$ and its associated closed braid $\overline{\beta}$, denote the classical unreduced Burau representation (as described in \cite{birman2005braids}) with variable $t$ as $\Phi(\beta,t)$.  We denote the graded Euler characteristic of $SKh(\overline\beta)$ as $\chi_{SKh}(\overline{\beta})$.  The relationship between the Burau representation of a braid and the $U_q(sl_2)$ Reshetikhin-Turaev invariant is well-known among experts (cf. \cite{jones1987hecke}, \cite{jackson2001braid}, \cite{jackson2011lawrence}), and a relationship between the  Reshetikhin-Turaev invariant and the graded Euler characteristic of sutured Khovanov homology is described in \cite{grigsby2011gradings}, building on work in Khovanov's thesis \cite{khovanov1997graphical} (see also \cite{khovanov2002quivers}).  In this paper we recover the relationship between the Burau representation and sutured Khovanov homology explicitly in Khovanov's diagrammatic language:

\begin{Thm}\label{trace} Given an $n$-braid $\beta$ with $n_{+}$ positive crossings and $n_{-}$ negative crossings,
$$ \chi_{SKh}(\overline{\beta})|_{k=n-2} = (qt)^{n-2}(q)^{n_{+}-n_{-}} \tr \left(\Phi(\beta,q^{2})\right) $$
\end{Thm}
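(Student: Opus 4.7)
The plan is to compute $\chi_{SKh}(\overline{\beta})|_{k=n-2}$ directly from the resolution cube defining sutured Khovanov homology and to organize the resulting state sum as the trace of an $n \times n$ matrix which agrees with $\Phi(\beta, q^{2})$ up to the stated normalization. The underlying picture is that, at annular grading $k = n - 2$, the Khovanov construction collapses into a linear $n$-dimensional representation of the braid group, and the braid closure collapses into matrix trace.

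The first step is to identify the generators of the complex at $k = n - 2$ for each vertex of the resolution cube. The top annular grading $k = n$ is realized only by the ``braid-like'' resolution (every crossing smoothed so as to preserve the $n$ vertical strands) with every nontrivial circle labeled $w_{+}$. Dropping two units in the annular grading then admits (i) the braid-like resolution with exactly one circle relabeled $w_{-}$, giving $n$ generators, and (ii) resolutions involving the non-braid-like (cup-cap) smoothing at one or more crossings, with the resulting mixture of trivial and nontrivial circles labeled appropriately. I would show that these contributions organize naturally into the entries of an $n \times n$ matrix $M(\beta, q)$ indexed by the endpoints of the strands. Next, I would verify locally that for each elementary generator $\sigma_{i}^{\pm 1}$ the corresponding matrix factor equals $\Phi(\sigma_{i}^{\pm 1}, q^{2})$ on the nontrivial $2 \times 2$ block and the identity elsewhere, with an overall $q^{\pm 1}$ that is absorbed into the writhe factor $q^{n_{+} - n_{-}}$. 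Because Khovanov's diagrammatic composition of cobordisms corresponds to matrix multiplication, this implies $M(\beta, q) = q^{n_{+} - n_{-}} \Phi(\beta, q^{2})$, and the braid closure replaces $M(\beta, q)$ by its trace. The prefactor $(qt)^{n-2}$ records the quantum and annular grading of a reference generator at the annular level $n-2$.

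The main obstacle will be the local verification for $\sigma_{i}^{\pm 1}$, where one must derive the nontrivial Burau block --- for a positive crossing, $\left(\begin{smallmatrix} 1 - q^{2} & q^{2} \\ 1 & 0 \end{smallmatrix}\right)$ in the usual convention --- from the Khovanov differential restricted to $k = n - 2$. This amounts to summing over the two smoothings of the single crossing, tracking the sign and quantum grading shifts in the Khovanov differential, and matching the cancellations between labelings of trivial circles (each contributing a factor of $q + q^{-1}$ to the Euler characteristic) with the polynomial entries of the Burau matrix. It is here that the restriction to annular grading $n - 2$ plays an essential role: only at this grading does the decategorified invariant collapse to a finite-dimensional representation of $B_{n}$, rather than to a single Laurent polynomial.
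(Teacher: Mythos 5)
Your overall architecture matches the paper's: decategorify crossing-by-crossing at annular grading $k=n-2$, organize the state sum into an $n\times n$ matrix representation of $B_n$ (the paper does this via the Grigsby--Wehrli/Khovanov Reshetikhin--Turaev matrix $J(\beta)|_{[n-2]}$ rather than recomputing the Euler characteristic from scratch, but it is the same state sum), prove multiplicativity under stacking, identify the local factors with Burau at $t=q^2$, and close up by taking the trace. The grading bookkeeping you describe --- trivial circles contributing $q+q^{-1}$, the overall scalars $q^{\pm 1}$ per crossing cancelling against the writhe normalization, the prefactor $(qt)^{n-2}$ --- all checks out against the paper.

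There is, however, one concrete point where your plan as written would fail, and it sits exactly in the step you identify as the main obstacle. In the natural basis of generators (braid-like resolution with one strand carrying $v_-$), the off-diagonal entries of the local matrix come from the cup--cap resolution, whose two saddle contributions are weighted symmetrically; the paper's computation gives the block $\left(\begin{smallmatrix} 1-q^{2} & -q \\ -q & 0 \end{smallmatrix}\right)$ (up to the global scalar) for a positive generator. This is a \emph{symmetric} matrix, whereas the Burau block $\left(\begin{smallmatrix} 1-q^{2} & q^{2} \\ 1 & 0 \end{smallmatrix}\right)$ is not, so the local factor does not ``equal'' $\Phi(\sigma_i,q^2)$ on the nose as you assert. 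The missing idea is that the local factors are simultaneously conjugate to the Burau matrices by a single diagonal change of basis independent of $i$ (the paper uses $A=\mathrm{diag}(q^{-1},-q^{-2},q^{-3},\dots)$, with the alternating signs needed to flip the off-diagonal entries' signs consistently). Since the conjugating matrix is the same for every generator, it cancels in the product and the trace identity survives; but without noticing that a rescaling is required, the local verification you propose would simply produce a matrix that visibly disagrees with Burau entry-by-entry. A secondary, smaller omission: the per-crossing scalars carry signs ($-q^{2}$ for a negative generator versus $q^{-1}$ for a positive one), so the cancellation against the global normalization $(-1)^{n_-}q^{n_+-2n_-}$ involves tracking $(-1)^{n_-}$ as well as powers of $q$, not just a writhe power of $q$.
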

That is, the trace of the Burau representation of a braid can be recovered from the $SKh$ of its closure.

  Theorem \ref{mutation} is a consequence of Theorem \ref{trace} along with calculations of the corresponding traces. Indeed, Theorem \ref{trace} gives a useful method for distinguishing the sutured Khovanov homologies of some braids:

\begin{Cor}\label{exponent} Suppose two $n$-braids $\beta_{1}$ and $\beta_{2}$ have the same exponent sum.  If the traces of the Burau representations of $\beta_{1}$ and $\beta_{2}$ differ, then the sutured Khovanov homologies of the two braids differ as well.
\end{Cor}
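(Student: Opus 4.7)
The plan is to deduce the corollary directly from Theorem \ref{trace} by comparing the two sides of the formula for $\beta_1$ and $\beta_2$. First I would apply Theorem \ref{trace} to each braid to obtain
\begin{equation*}
\chi_{SKh}(\overline{\beta_i})|_{k=n-2} = (qt)^{n-2}(q)^{n_+^{(i)} - n_-^{(i)}}\, \tr\bigl(\Phi(\beta_i,q^2)\bigr),\qquad i=1,2.
\end{equation*}
Since both braids have the same number of strands $n$ and, by hypothesis, the same exponent sum $n_+^{(1)} - n_-^{(1)} = n_+^{(2)} - n_-^{(2)}$, the monomial prefactor $(qt)^{n-2}(q)^{n_+ - n_-}$ is identical in the two equations.

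Next I would observe that this prefactor is a nonzero monomial in $\mathbb{Z}[q^{\pm 1}, t^{\pm 1}]$, so multiplication by it is injective on Laurent polynomials. Consequently, if $\tr(\Phi(\beta_1,q^2)) \neq \tr(\Phi(\beta_2,q^2))$, then $\chi_{SKh}(\overline{\beta_1})|_{k=n-2} \neq \chi_{SKh}(\overline{\beta_2})|_{k=n-2}$, and in particular the full graded Euler characteristics $\chi_{SKh}(\overline{\beta_1})$ and $\chi_{SKh}(\overline{\beta_2})$ disagree. Finally, since the graded Euler characteristic is determined by the graded homology groups, any difference in Euler characteristics forces a difference in the homologies themselves, yielding $SKh(\overline{\beta_1}) \neq SKh(\overline{\beta_2})$. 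There is no real obstacle here: the content of the corollary is entirely in Theorem \ref{trace}, and the argument is a formal contrapositive once one notes that the two prefactors agree under the shared exponent sum hypothesis.
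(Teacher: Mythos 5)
Your proof is correct and matches the paper's (implicit) argument: the paper treats the corollary as an immediate consequence of Theorem \ref{trace}, exactly because equal strand number and equal exponent sum make the monomial prefactor $(qt)^{n-2}q^{n_+-n_-}$ identical, so differing Burau traces (at $t=q^2$, which is an injective substitution on Laurent polynomials) force differing restricted Euler characteristics and hence differing homologies. Nothing is missing.
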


Also note that $SKh$ cannot \textit{always} distinguish mutant braids: in Corollary 2 of \cite{baldwin2012categorified}, Baldwin and Grigsby (using a result of Birman and Menasco in \cite{birman2008note}) proved that there exist infinitely many pairs of non-conjugate mutant 3-braids with the same $SKh$.

\vspace{11pt}
\textbf{Acknowledgments:} The author thanks Eli Grigsby for her guidance and support throughout this project.  The author also thanks John Baldwin and Eli Grigsby for suggesting the topic, John Baldwin and Hoel Queffelec for many helpful conversations, Andrew Phillips for answering several of her questions, and Stephan Wehrli for useful correspondence.  This work was partially supported by NSF CAREER award DMS-1151671.
\section{Sutured Khovanov Homology}

We assume throughout that homology is computed with coefficients in $\mathbb{Z}/2\mathbb{Z}$.  

Sutured Khovanov homology, first constructed in \cite{asaeda2004categorification} and related to knot Floer homology by Roberts in \cite{roberts2007knot}, is an invariant for links in a thickened annulus $A \times I$.  
Specifically, $A \times I$ is embedded in $\mathbb{R}^{2} \times \mathbb{R}$, and we project a link $\mathbb{L}$ into $A$ embedded in $\mathbb{R}^{2}$.  As in the construction of Khovanov homology, we pick an order for the $n$ crossings of a projection of $\mathbb{L}$, and associate to each vertex of the cube $\{0,1\}^{n}$ a resolution of $\mathbb{L}$ by resolving the crossings of the projection according to the rule in Figure \ref{resolutions}.  The resulting object is called the cube of resolutions of $\mathbb{L}$.

\begin{figure}[h!]
\labellist
\small\hair 2pt
\pinlabel $0$ at 62 80
\pinlabel $1$ at 62 28
\endlabellist
\centering
\includegraphics[scale=0.7, clip=false]{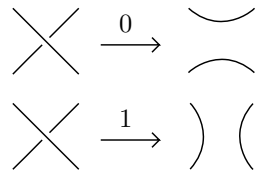}
\caption{Resolutions of crossings}\label{resolutions}
\end{figure}

One now associates a chain complex to the cube of resolutions. A circle in a resolution is said to be trivial if it bounds a disk in $A$, and non-trivial if not.  As in Khovanov homology, to each circle in a resolution we assign a copy of $V$, a vector space generated by the two basis elements $v_{+}$ and $v_{-}$.  Here $V$ is endowed with two gradings, the standard Khovanov $q$-grading and an extra $k$-grading. The $k$-grading is assigned as follows: $gr_{k}(v_{\pm}) =  \pm 1$ when the corresponding $V$ is assigned to a non-trivial circle, and $gr_{k}(v_{\pm}) =  0$ when the corresponding $V$ is assigned to a trivial circle.

 To each vertex of the cube we associate the vector space $V^{\otimes c}\{i(\mathcal{I})\}$, where $c$ is the number of circles in that resolution, and $\{i(\mathcal{I})\}$ denotes a shift in the $q$-grading by the height $i(\mathcal{I})$, that is, $i(\mathcal{I})$ is the number of $1$'s in the vertex $\mathcal{I}$.  

The standard Khovanov differential is non-increasing in the $k$-grading, which induces a filtration on Khovanov's chain complex.  After an overall shift of $\{n_{+} - 2n_{-}\}$ in the $q$-grading, the homology of the associated graded chain complex is $SKh(\mathbb{L})$.  This is a triply graded invariant of $\mathbb{L}$ in $A \times I$.

The graded Euler characteristic of $SKh(\mathbb{L})$ is:
$$ \chi_{SKh}(\mathbb{L}) = \sum_{i,j,k} (-1)^{i}q^{j}t^{k} \text{dim}(SKh(\mathbb{L}^{i,j,k}))$$
where $SKh(\mathbb{L})^{i,j,k}$ is the homogeneous component of $SKh(\mathbb{L})$ in homological grading $i$, $q$-grading $j$, and $k$-grading $k$.

We now describe the construction of the Reshetikhin-Turaev invariant, a $U_{q}(sl_2)$-module map that is an invariant of tangles.  Given a tangle $\mathbb{T}$, it is calculated by constructing a matrix $J(\mathbb{T})$ and multiplying by final shifts of $(-1)^{n_{-}}(q)^{n_{+}-2n_{-}}$ to ensure invariance under the Reidemeister moves.  We will refer to the matrix $J(\mathbb{T})$ as the Reshetikhin-Turaev matrix of $\mathbb{T}$.  In what follows, we restrict our attention to braids and their closures and take advantage of some simplifications in notation that this yields.  Refer to \cite{grigsby2011gradings} for a more general description.  

Recall from Grigsby and Wehrli \cite{grigsby2011gradings}, building on work in \cite{khovanov1997graphical},  that the graded Euler characteristic of the sutured Khovanov homology of a closed braid $\overline\beta$ can be calculated using the Reshetikhin-Turaev matrix $J(\beta)$ of any associated braid $\beta$ whose closure is isotopic to $\overline\beta$ in $A \times I$  ($n_{+}$ denotes the number of positive crossings and $n_{-}$ the number of negative crossings):

\begin{align*}
\chi_{SKh}(\overline\beta) 
& = (-1)^{n_{-}}(q)^{n_{+}-2n_{-}}\sum_{k} (qt)^{k} \tr(J(\beta)|_{[k]})
\end{align*}

For an $n$-braid $\beta$, the Reshetikhin-Turaev matrix $J(\beta)$ is a $U_{q}(sl_{2})$-module map $V_{1}^{\otimes n} \to V_{1}^{\otimes n}$ intertwining the quantum group action.  Here $V_{1}$ is the two-dimensional fundamental representation of $U_{q}(sl_{2})$ with underlying vector space $V_{1} = \mathbb{C}(q)v_{+} \oplus \mathbb{C}(q)v_{-}$.  The generators $E, F, K$ of $U_{q}(sl_2)$ act by
$$Kv_{+} = qv_{+}, \,\,\,\,\, Kv_{-} = q^{-1}v_{-}, \,\,\,\,\, Ev_{+} = Fv_{-} = 0, \,\,\,\,\, Ev_{-} = v_{+}, \,\,\,\,\, Fv_{+} = v_{-}. $$

In general, $J(\beta)$ is constructed diagrammatically by first calculating a matrix $J(\mathbb{\beta}^{I})$ for each choice of resolution $I$ of $\beta$; the matrices associated to the resolutions are combined via $$J(\mathbb{\beta}) = \sum_{I} (-q)^{i(I)} J(\mathbb{\beta}^{I})$$ 
	Each $J(\mathbb{\beta}^{I})$ is calculated as follows.  We choose a basis for $V_{1}^{\otimes n}$ in one-to-one correspondence with $n$-tuples in $\{ \uparrow, \downarrow \}^{n}$, namely by identifying $\uparrow$ with $v_{+}$ and identifying $\downarrow$ with $v_{-}$.  For every $\textbf{i}, \textbf{j} \in \{\uparrow, \downarrow\}^{n}$, orient the top of $\beta^{I}$ locally with $\bf{i}$ and the bottom of $\beta^{I}$ locally with $\bf{j}$, reading left to right. The ($\textbf{i}, \textbf{j}$) entry of $J(\beta^{I})$ is zero if any of the orientations in the resulting  diagram are incompatible with each other.  

If the orientations are compatible, we form the set $E_{\textbf{i,j}}(\beta^{I}) = \{E(\beta^{I}): t(\beta^{I}) = $ $\textbf{i}$, $b(\beta^{I}) = $ $\textbf{j}\}$ of all possible orientations of $\beta^{I}$ satisfying that the top orientation of $\beta^{I}$ is $\bf{i}$ and the bottom orientation of $\beta^{I}$ is $\bf{j}$.  (For example, if $\beta^{I}$ contains precisely one closed component, $E(\beta^{I})$ contains two elements, one for each orientation of the closed component). Then the $\bf{i}, \bf{j}$ entry of $J(\beta^{I})$ is a weighted sum over all elements of $E_{\textbf{i,j}}(\beta^{I})$:
$$ J(\beta^{I})_{\textbf{i,j}} = \sum_{\mathbb{S} \in E_{\textbf{i,j}}(\beta^{I})} q^{j(\mathbb{S})}$$
where we describe $q^j(\mathbb{S})$, the appropriate power of $q$, here.

For each element in $E_{\textbf{i,j}}(\beta^{I})$, every arc is assigned a $q^{0}$ unless we have one of the cases shown in Figure \ref{rt}, in which case the assignment is as shown.
\begin{figure}[ht!]
\labellist
\small\hair 2pt
\pinlabel $q^{-1}$ at 55 34
\pinlabel $q$ at 206 31
\endlabellist
\centering
\includegraphics[scale=0.65]{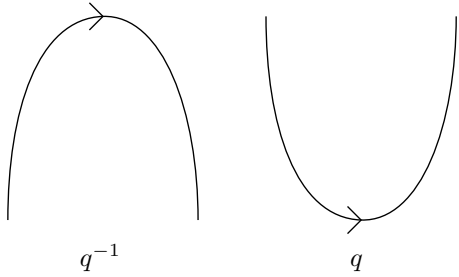}
\caption{Assignments in the two special cases}\label{rt}
\end{figure}
We multiply the assignments corresponding to each arc in the diagram to obtain a single power of $q$, written $q^{j(\mathbb{S})}$ for every element $\mathbb{S}$ in $E_{\textbf{i,j}}(\beta^{I})$.  

The $k$ in $\chi_{SKh}$ corresponds to $k = \# (\uparrow) - \# (\downarrow)$ in $\{\uparrow, \downarrow\}^{n}$.  Notice that the $(\bf{i},\bf{j})$ entry in $J(\mathbb{\beta})$ is zero if $k(\bf{i})$ $\neq k(\bf{j})$.  This implies that $J(\mathbb{\beta})$ is a block diagonal matrix, with a block for each $k$. We denote the block sub-matrix corresponding to a fixed $k$ by $J(\beta)|_{[k]}$. 

\section{Results}

In this section we prove the main results.  We start with Theorem \ref{trace}, which we restate here for reference:

\vspace{11 pt}
\noindent
\textbf{Theorem \ref{trace}.} \textit{Given a braid $\beta$, $$\chi_{SKh}(\overline{\beta})|_{k=n-2} = (qt)^{n-2}(q)^{n_{+}-n_{-}} \tr \left(\Phi(\beta,q^{2})\right) $$}

In order to prove Theorem \ref{trace}, we first observe that the Reshetikhin-Turaev matrix takes a particularly nice form for braids when we restrict to $k = n-2$. 

\begin{Lem}\label{matrix} Consider the standard Artin generators $\sigma_{1}, \ldots, \sigma_{n-1}$ for the $n$-strand braid group.  Then $J(\sigma_{i})|_{[n-2]}$ is an $n$ by $n$ matrix that takes the following form: there is a 2 by 2 block
$$ \left(\begin{matrix} 1 &0 \\0 & 1 \end{matrix} \right) -q\left(\begin{matrix} q & 1  \\ 1 & q^{-1} \end{matrix} \right) =  q^{-1} \left(\begin{matrix} q-q^{3} & -q^{2}  \\ -q^{2} & 0  \end{matrix} \right)$$
with the $0$ in the $(i,i)$ spot and $q^{-1}(q)$ along the diagonal everywhere else.

$J(\sigma_{i}^{-1})|_{[n-2]}$ takes the following form: there is a 2 by 2 block
$$ \left (\begin{matrix} q & 1  \\ 1 & q^{-1} \end{matrix} \right) -q\left(\begin{matrix} 1 &0 \\0 & 1 \end{matrix} \right) = -q^{2} \left( \begin{matrix} 0 & -q^{-2} \\ -q^{-2} & -q^{-3}+q^{-1} \end{matrix} \right)$$
with the $0$ in the $(i,i)$ spot, and $-q^{2}(q^{-1})$ along the diagonal everywhere else.  

For ease of notation in later calculations, we name the factored matrices $L$: e.g., $J(\sigma_{i})|_{[n-2]} = q^{-1}L(\sigma_{i}))$ and $J(\sigma_{i}^{-1})|_{[n-2]} = -q^{2}L(\sigma_{i}^{-1})$.
\end{Lem}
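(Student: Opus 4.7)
The plan is to compute $J(\sigma_i)|_{[n-2]}$ directly from the diagrammatic construction described in Section 2, exploiting the fact that $k = n-2$ forces an essentially one-dimensional combinatorial problem. The relevant weight space is $n$-dimensional, spanned by basis vectors $e_j$ ($j=1,\dots,n$) corresponding to the word in $\{\uparrow,\downarrow\}^n$ with the unique $\downarrow$ in position $j$ and $\uparrow$ elsewhere. First I would establish the claimed block decomposition: for any $j \notin \{i,i+1\}$, strand $j$ appears as a vertical line in both resolutions of $\sigma_i$, so the orientations at top and bottom must agree there. Hence the only nonzero entries with $e_j$ in either row or column ($j \notin \{i,i+1\}$) lie on the diagonal, while all mixing occurs inside the $2\times 2$ block indexed by $\{i,i+1\}$.

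Next I would compute the two resolutions independently. The $0$-resolution of $\sigma_i$ is the identity tangle, so $J(\sigma_i^{0})$ on the $[n-2]$ subspace is the identity matrix: every arc is vertical, all orientations propagate trivially, and all weights are $q^0 = 1$. The $1$-resolution is the cup-cap smoothing at strands $i, i+1$. Here, the would-be diagonal entries with $j \notin \{i,i+1\}$ automatically vanish, because then the top endpoints of the cap are both labeled $\uparrow$, and the cap cannot be consistently oriented with two identically labeled endpoints (the orientation must reverse along a smooth arc). Inside the $2 \times 2$ block, however, all four input/output pairings have a unique consistent orientation of the cup and cap, and summing the arc weights prescribed by Figure \ref{rt} gives the cup-cap matrix $\begin{pmatrix} q & 1 \\ 1 & q^{-1} \end{pmatrix}$. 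Combining via $J(\sigma_i) = J(\sigma_i^{0}) - q J(\sigma_i^{1})$ yields the stated formula, after factoring out $q^{-1}$.

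The case of $\sigma_i^{-1}$ is handled the same way, with the roles of the $0$- and $1$-resolutions swapped: the $0$-resolution of a negative crossing is the cup-cap smoothing and the $1$-resolution is the identity tangle. The same block analysis then gives $J(\sigma_i^{-1})|_{[n-2]} = \begin{pmatrix} q & 1 \\ 1 & q^{-1} \end{pmatrix} - q \, I$ on the block together with $0 - q \cdot 1 = -q$ on the remaining diagonal entries, matching the second factored form after extracting $-q^2$.

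The main obstacle is the case-check for the $2\times 2$ block: for each of the four top/bottom labelings, I must verify that the cup and cap admit a unique consistent orientation, and I must carefully track which of the four oriented arcs in the diagram receives the special $q^{\pm 1}$ weight of Figure \ref{rt} and which get $q^0$. This is a finite, purely local verification, but it is the only step where a sign or power of $q$ could slip; once it is done correctly, the global structure of the matrix follows automatically from the reduction in the first paragraph.
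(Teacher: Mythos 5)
Your proposal is correct and follows essentially the same route as the paper's proof: a direct computation from the diagrammatic rules of Section 2, with the $0$-resolution giving the identity matrix, the $1$-resolution giving the cup--cap block $\left(\begin{smallmatrix} q & 1 \\ 1 & q^{-1}\end{smallmatrix}\right)$ (with the remaining diagonal entries vanishing by orientation incompatibility of the cap), and the two combined via $J(\sigma_i) = J(\sigma_i^0) - qJ(\sigma_i^1)$, with the resolutions swapped for $\sigma_i^{-1}$. The only difference is that the paper carries out the calculation explicitly for $\sigma_1 \in B_3$ and asserts the general case is clear, whereas you argue the block structure for general $n$ and $i$ directly; both versions leave the same finite local check of the arc weights against Figure \ref{rt} to the reader.
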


For example, for $\sigma_{2} \in B_{4}$, 
$$ J(\sigma_{2})|_{[2]} = q^{-1} \left(\begin{matrix} q & 0 & 0 & 0 \\ 0 & q-q^{3} & -q^{2} & 0 \\ 0 & -q^{2} & 0 & 0 \\ 0 & 0 & 0 & q \end{matrix}\right) = q^{-1}L(\sigma_{2})$$

We note again here that the Reshetikhin-Turaev \textit{matrix} is not an invariant of braids. For example, $J(\sigma_{i})|_{[n-2]}J(\sigma_{i}^{-1})|_{[n-2]}$ is not the identity.  However, the final grading shift $(-1)^{n_{-}}(q)^{n_{+}-2n_{-}}$ removes both the $q^{-1}$ coefficient from each positive generator and the $-q^{2}$ coefficient from each negative generator, taking care of this problem:
$$\resizebox{1\hsize}{!}{$\displaystyle{(-1)^{1}(q)^{1-2}J(\sigma_{i})|_{[n-2]}J(\sigma_{i}^{-1})|_{[n-2]} = (-1)^{1}(q)^{1-2}(-1)^{1}(q)^{2-1} L(\sigma_{i})L(\sigma_{i}^{-1}) =  I}$}$$

\begin{proof}[Proof of Lemma \ref{matrix}]

This is a calculation.  We show it here for $\sigma_{1}$ in $B_{3}$; it will be clear how to extend the calculation to more strands and other $\sigma_{i}$.  Restricting to $k=1$, we choose the basis ordering $\{\downarrow \uparrow \uparrow, \uparrow \downarrow \uparrow, \uparrow \uparrow \downarrow\}$. For example, for $\sigma_{1} \in B_{3}$, the $(1,2)$ entry of $J(\sigma_{1})|_{[1]}$  corresponds to orienting the top strands with $\downarrow \uparrow \uparrow$ and the bottom strands with $\uparrow \downarrow \uparrow$. 

Calculating using the rules described in Section 2, the matrix associated to the $0$-resolution of $\sigma_{1}$ is the identity matrix, and the matrix associated to the $1$-resolution of $\sigma_{1}$ is 
$$\left(\begin{matrix} q & 1 & 0 \\ 1 & q^{-1} & 0 \\ 0 & 0 & 0 \end{matrix} \right)$$
since the resolutions associated to $\sigma_{1}$ are as shown in Figure \ref{sigma1}.

\begin{figure}[ht!]
\labellist
\small\hair 2pt
\pinlabel $\sigma_{1}$ at 40 25
\pinlabel 0-resolution at 161 25
\pinlabel 1-resolution at 280 25
\endlabellist
\centering
\includegraphics[scale=0.65]{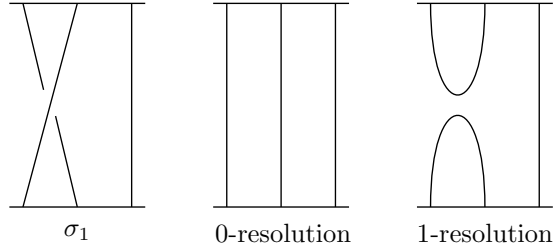}
\caption{The resolutions of $\sigma_{1} \in B_{3}$}\label{sigma1}
\end{figure}

So $J(\sigma_{1})|_{[1]}$ is:
$$  J(\sigma_{1}^{0})_{[1]} - qJ(\sigma_{1}^{1})_{[1]} = \left(\begin{matrix} 1 & 0 & 0 \\ 0 & 1 & 0 \\ 0 & 0 & 1 \end{matrix} \right) - q\left(\begin{matrix} q & 1 & 0 \\ 1 & q^{-1} & 0 \\ 0 & 0 & 0 \end{matrix}\right)
= q^{-1}\left(\begin{matrix} q-q^{3} & -q^{2} & 0 \\ -q^{2} & 0 & 0 \\ 0 & 0 & q \end{matrix}\right)$$
For $\sigma_{1}^{-1}$, the only difference is that the $0$ and $1$ resolutions are switched.

\end{proof}

We now show that given a braid $\beta$ with braid word in the standard Artin generators, we can find $J(\beta)|_{[n-2]}$ by composing the matrices for each braid generator (as described in Lemma \ref{matrix}).

\begin{Lem}  Given a braid $\beta = \sigma_{l_{1}}\sigma_{l_{2}} \cdots \sigma_{l_{m}}$,
$$ J(\beta)|_{[n-2]} = J(\sigma_{l_{1}})|_{[n-2]}J(\sigma_{l_{2}})|_{[n-2]}\cdots J (\sigma_{l_{m}})|_{[n-2]}$$
\end{Lem}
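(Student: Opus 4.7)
The plan is to derive the lemma from two facts: that $J$ is multiplicative under vertical composition of braids, $J(\beta_1 \beta_2) = J(\beta_1)J(\beta_2)$, and that each $J(\sigma_{l_i})$ is block-diagonal with respect to the $k$-grading, with blocks $J(\sigma_{l_i})|_{[k]}$. The product of two block-diagonal matrices is block-diagonal, computed block-by-block, so a straightforward induction on the word length $m$ then gives the statement. Block-diagonality is immediate from the observation recorded at the end of Section 2: the $(\mathbf{i}, \mathbf{j})$ entry of any $J(\beta)$ vanishes whenever $k(\mathbf{i}) \neq k(\mathbf{j})$, since any compatible orientation of a tangle diagram preserves $\#(\uparrow) - \#(\downarrow)$ between top and bottom.

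So the essential content is the multiplicativity. Starting from the cube-of-resolutions formula $J(\beta) = \sum_I (-q)^{i(I)} J(\beta^I)$, observe that a resolution $I$ of $\beta_1 \beta_2$ is exactly a pair $(I_1, I_2)$ of resolutions of the two factors, that the resolved diagram is the vertical stacking $\beta_1^{I_1}\beta_2^{I_2}$, and that $i(I) = i(I_1) + i(I_2)$. Thus it suffices to prove $J(T_1 T_2) = J(T_1) J(T_2)$ for flat (crossingless) tangle diagrams $T_1, T_2$. For such diagrams, any oriented diagram of $T_1 T_2$ compatible with boundary data $(\mathbf{i}, \mathbf{j})$ is determined by the orientation $\mathbf{k} \in \{\uparrow, \downarrow\}^n$ assigned to the middle horizontal slice together with compatible orientations of $T_1$ (with boundary $(\mathbf{i}, \mathbf{k})$) and $T_2$ (with boundary $(\mathbf{k}, \mathbf{j})$). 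Because $q^{j(\mathbb{S})}$ is defined as a product of weights arc by arc, it factors as $q^{j(\mathbb{S}_1)} q^{j(\mathbb{S}_2)}$, and summing gives
$$ J(T_1 T_2)_{\mathbf{i}, \mathbf{j}} = \sum_{\mathbf{k}} J(T_1)_{\mathbf{i}, \mathbf{k}} J(T_2)_{\mathbf{k}, \mathbf{j}}, $$
which is exactly the formula for matrix multiplication.

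The main obstacle is a subtlety in stacking flat diagrams: a cap in $T_1$ meeting a cup in $T_2$ creates a new closed component of $T_1 T_2$, and one needs to check that the two orientations of such a component are counted exactly once by the sum over intermediate $\mathbf{k}$. This is automatic, since the two orientations of the new closed component correspond bijectively to the two possible arrow directions ($\uparrow$ or $\downarrow$) at the relevant strand in the middle slice, but it is the only nontrivial piece of bookkeeping. Everything else is formal, and the lemma follows by induction on $m$.
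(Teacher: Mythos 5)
Your proposal is correct and follows essentially the same route as the paper's proof: reduce to the resolution level via the cube formula $J(\beta)=\sum_I(-q)^{i(I)}J(\beta^I)$, then establish multiplicativity for flat diagrams by decomposing compatible orientations over the intermediate slice $\mathbf{k}$ and using the arc-by-arc multiplicativity of the weights $q^{j(\mathbb{S})}$. Your explicit check that closed components created by stacking are counted correctly (their two orientations corresponding to two distinct middle-slice labels) is a point the paper leaves implicit, but it does not change the argument.
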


\begin{proof}
Recall that $J(\beta)$ is a block diagonal matrix with a block for each $k$; hence we can restrict to a specific $k$ for matrix operations. In what follows we drop the $k=n-2$ notation for simplicity.  We prove this by induction on the length of the braid word.  The base case is trivial. 

Any given resolution $\beta^{\mathcal{I}}$ of $\beta$ restricts to a resolution of each individual braid generator; we write $\beta^{\mathcal{I}} = \sigma_{l_{1}}^{\mathcal{I}}\cdots\sigma_{l_{m}}^{\mathcal{I}}$ where by $\sigma_{l_{j}}^{\mathcal{I}}$ we mean $\sigma_{l_{j}}$ restricted to the $j$'th entry of $\mathcal{I}$ (either a 0 or a 1).  We first show the result on the resolution level: it suffices to show that 
$$ J(\beta^{\mathcal{I}}) = J(\sigma_{l_{1}}^{\mathcal{I}}\cdots\sigma_{l_{m-1}}^{\mathcal{I}}) J(\sigma_{l_{m}}^{\mathcal{I}})$$  We have $$
J(\beta^{\mathcal{I}})_{\textbf{i,j}} = \sum_{\mathbb{S} \in E_{\textbf{i,j}}(\beta^{\mathcal{I}})} q^{j(\mathbb{S})} $$
$$= \sum_{\mathbb{S''} \in E_{\textbf{k,j}}(\sigma_{l_{m}}^{\mathcal{I}})} \sum_{\mathbb{S'} \in E_{\textbf{i,k}}(\sigma_{l_{1}}^{\mathcal{I}} \cdots \sigma_{l_{m-1}}^{\mathcal{I}})} q^{j(\mathbb{S'}\mathbb{S''})}$$
where by $\mathbb{S'}\mathbb{S''}$ we mean the vertical stacking of these two enhanced resolutions.  The expression becomes:
$$\sum_{\mathbb{S''} \in E_{\textbf{k,j}}(\sigma_{l_{m}}^{\mathcal{I}})} \sum_{\mathbb{S'} \in E_{\textbf{i,k}}(\sigma_{l_{1}}^{\mathcal{I}} \cdots \sigma_{l_{m-1}}^{\mathcal{I}})} q^{j(\mathbb{S'})}q^{j(\mathbb{S''})}$$
$$= \sum_{\mathbb{S''} \in E_{\textbf{k,j}}(\sigma_{l_{m}}^{\mathcal{I}})}  q^{j(\mathbb{S''})} \sum_{\mathbb{S'} \in E_{\textbf{i,k}}(\sigma_{l_{1}}^{\mathcal{I}} \cdots \sigma_{l_{m-1}}^{\mathcal{I}})} q^{j(\mathbb{S'})} $$
$$ = \sum_{k} J(\sigma_{l_{m}}^{\mathcal{I}})_{kj} J(\sigma_{l_{1}}^{\mathcal{I}}\cdots\sigma_{l_{m-1}}^{\mathcal{I}})_{ik} = \sum_{k} J(\sigma_{l_{1}}^{\mathcal{I}}\cdots\sigma_{l_{m-1}}^{\mathcal{I}})_{ik}J(\sigma_{l_{m}}^{\mathcal{I}})_{kj}$$
This shows 
$$ J(\beta^{\mathcal{I}}) = J(\sigma_{l_{1}}^{\mathcal{I}})\cdots J(\sigma_{l_{m}}^{\mathcal{I}})$$

Now recall that 
$$ J(\beta) = \sum_{I} (-q)^{i(\mathcal{I})} J(\beta^{\mathcal{I}})$$
and
$$ J(\sigma_{l_{1}}) \cdots J(\sigma_{l_{m}}) = (J(\sigma_{l_{1}}^{0}) - qJ(\sigma_{l_{1}}^{1}))\cdots(J(\sigma_{l_{m}}^{0})-qJ(\sigma_{l_{m}}^{1}))$$
Multiplying out the second expression gives the first.

\end{proof}

Each of the factored matrices in Lemma \ref{matrix} is (up to a constant) conjugate to the Burau representation: 

\begin{proof}[Proof of Theorem \ref{trace}] We show here how $L(\sigma_{1})$ is conjugate (up to a constant power of $q$) to the Burau representation for $\sigma_{1} \in B_{3}$, and then we will see that this is true for any $\sigma_{i}^{\pm 1} \in B_{n}$ for any $n$:
$$ L(\sigma_{1}) = \left( \begin{matrix} q-q^{3} & -q^{2} & 0 \\ -q^{2} & 0 & 0 \\ 0 & 0 & q \end{matrix} \right) = q \left(\begin{matrix} 1-q^{2} & -q & 0 \\ -q & 0 & 0 \\ 0 & 0 & 1 \end{matrix} \right)$$
Set $$ A = \left(\begin{matrix} q^{-1} & 0 & 0 \\ 0 & -q^{-2} & 0 \\ 0 & 0 & q^{-3} \end{matrix}\right)$$
Then
$$ A\left(\begin{matrix} 1-q^{2} & -q & 0 \\ -q & 0 & 0 \\ 0 & 0 & 1 \end{matrix} \right)A^{-1} = \left(\begin{matrix} 1-q^{2} & q^{2} & 0 \\ 1 & 0 & 0 \\ 0 & 0 & 1 \end{matrix}\right)$$
which is the classical Burau matrix for $\sigma_{1}$ with $t=q^{2}$.  

The process is similar for $\sigma_{1}^{-1}$:
$$ L(\sigma_{1}^{-1}) = \left(\begin{matrix} 0 & -q^{-2} & 0 \\ -q^{-2} & -q^{-3}+q^{-1} & 0 \\ 0 & 0 & q^{-1} \end{matrix}\right) = q^{-1}\left(\begin{matrix} 0 & -q^{-1} & 0 \\ -q^{-1} & -q^{-2}+1 & 0 \\ 0 & 0 & 1\end{matrix}\right)$$
and
$$A\left(\begin{matrix} 0 & -q^{-1} & 0 \\ -q^{-1} & -q^{-2}+1 & 0 \\ 0 & 0 & 1\end{matrix}\right)A^{-1} = \left(\begin{matrix} 0 & 1 & 0 \\ q^{-2} & -q^{-2}+1& 0 \\ 0 & 0 & 1 \end{matrix}\right)$$

 One can check that the same conjugating matrix $A$ works for each braid generator.  Expand $A$ to work for an arbitrary number of strands by continuing alternating signs along the diagonal and ending at $q^{-n}$. 


Now we have for $\beta = \sigma_{l_{1}}\cdots\sigma_{l_{m}}$, together with the fact that trace is invariant under conjugation,
\Small{
\begin{align*}
\chi_{SKh}(\overline\beta)|_{k=n-2} & = (-1)^{n_{-}}(q)^{n_{+}-2n_{-}} (qt)^{n-2} \tr(J(\mathbb{\beta})|_{[n-2]})\\
&= (-1)^{n_{-}}(q)^{n_{+}-2n_{-}} (qt)^{n-2} \tr(J(\sigma_{l_{1}})|_{[n-2]}\cdots J (\sigma_{l_{m}})|_{[n-2]})\\
& = (-1)^{n_{-}}(q)^{n_{+}-2n_{-}} (qt)^{n-2} (-1)^{n_{-}}(q)^{2n_{-}-n_{+}} \tr(L(\sigma_{l_{1}})\cdots L(\sigma_{l_{m}}))\\
&= (-1)^{n_{-}}(q)^{n_{+}-2n_{-}} (qt)^{n-2} (-1)^{n_{-}}(q)^{2n_{-}-n_{+}} \tr(AL(\sigma_{l_{1}})A^{-1}\cdots AL(\sigma_{l_{m}})A^{-1})\\
&= (qt)^{n-2} q^{n_{+}-n_{-}} \tr(\Phi(\beta,q^{2}))
\end{align*}
}
\normalsize
proving Theorem \ref{trace}.

\end{proof} 

We are now ready to prove Theorem \ref{mutation} using Theorem \ref{trace} (in particular using Corollary \ref{exponent}, since in each example the exponent sum of the braids is preserved under the mutation in question).

\begin{proof}[Proof of Theorem \ref{mutation}] We have two families of examples.

\noindent\textbf{Example 1}: The following is an example of a family of 4-braid pairs related by a braid-axis preserving mutation whose sutured Khovanov homologies differ.  See Figure \ref{morton}.

Given $A = \sigma_{2}^{-2}\sigma_{3}\sigma_{2}^{-1}\sigma_{1}^{-1}\sigma_{2}^{3}\sigma_{3}^{-1}\sigma_{2}\sigma_{1}$ and $B = \sigma_{2}^{-2}\sigma_{3}^{-1}\sigma_{2}^{-1}\sigma_{1}^{-1}\sigma_{2}^{3}\sigma_{3}\sigma_{2}\sigma_{1}$, then the braids $(\sigma_{1})^{k}A$ and $(\sigma_{1})^{k}B$ are related by a mutation and their $SKh$ differs for all $k \geq 0$. 

For $k=0$, a calculation shows that 
$$ \tr(\Phi(A,t)) = -t^{-3}+2t^{-2}-4t^{-1}+6-5t+3t^{2}-2t^{3}+t^{4} \,\,\,\, \text{and}$$
$$ \tr(\Phi(B,t)) = -2t^{-1} + 4 - 3t + t^{2}$$
and the result follows by Corollary \ref{exponent}.

For $k \geq 1$: first, we observe that the Burau matrix for $\sigma_{1}^{k}$ is as follows:

$$\left(\begin{matrix} \sum\limits_{m=0}^{k} (-t)^{m} & \sum\limits_{m=1}^{k} (-1)^{m+1}t^{m} & 0 & 0 \\
\sum\limits_{m=0}^{k-1} (-t)^{m} & \sum\limits_{m=1}^{k-1} (-1)^{m+1}t^{m} & 0 & 0 \\
0 & 0 & 1 & 0 \\
0 & 0 & 0 & 1 \end{matrix}\right)$$

We prove this by induction on $k$. The base case $k=1$ is trivially true.  


Call the following matrix $M$:

 $$\left(\begin{matrix} \sum\limits_{m=0}^{k} (-t)^{m} & \sum\limits_{m=1}^{k} (-1)^{m+1}t^{m} & 0 & 0 \\
\sum\limits_{m=0}^{k-1} (-t)^{m} & \sum\limits_{m=1}^{k-1} (-1)^{m+1}t^{m} & 0 & 0 \\
0 & 0 & 1 & 0 \\
0 & 0 & 0 & 1 \end{matrix}\right)\left(\begin{matrix} 1-t & t & 0 & 0 \\ 1 & 0 & 0 & 0 \\ 0 & 0 & 1 & 0 \\ 0 & 0 & 0 & 1 \end{matrix} \right)$$

We prove the result by examining the entries of $M$.  We show one entry here; the rest are similar.

$$M_{(1,1)} = (1-t)\sum\limits_{m=0}^{k} (-t)^{m} + \sum\limits_{m=1}^{k} (-1)^{m+1}t^{m} =$$ $$ \sum\limits_{m=0}^{k} (-t)^{m} + \sum\limits_{m=0}^{k} (-1)^{m+1}t^{m+1} + \sum\limits_{m=1}^{k} (-1)^{m+1}t^{m} = $$
$$ = \sum\limits_{m=1}^{k} (-t)^{m} + \sum\limits_{m=1}^{k+1} (-1)^{m}t^{m} + \sum\limits_{m=1}^{k} (-1)^{m+1}t^{m} =$$ $$ \sum_{m=0}^{k} (-t)^{m} + (-1)^{k+1}t^{k+1} + \sum\limits_{m=1}^{k} t^{m}((-1)^{m} + (-1)^{m+1})$$
Since $(-1)^{m} + (-1)^{m+1} = 0$, the desired result follows.

Now that we have established a formula for the Burau matrix of $\sigma_{1}^{k}$, we examine $\tr(\Phi(\sigma_{1}^{k}A),t)$ and $\tr(\Phi(\sigma_{1}^{k}B),t)$ (Mathematica gives us the entries in the matrices for $A$ and $B$):
$$\tr(\Phi(\sigma_{1}^{k}A),t) = \left(\sum\limits_{m=0}^{k}(-t)^{m}\right)(1-2t+2t^{2}-2t^{3}+t^{4}) + $$ $$\left(\sum\limits_{m=1}^{k}(-1)^{m+1}t^{m}\right)(-t^{-2}+3t^{-1}-4+3t-t^{2}) +$$ $$ \left(\sum\limits_{m=1}^{k} (-1)^{m+1}t^{m} \right)(-t^{-2}+2t^{-1}-1) -t^{-3}+3t^{-2}-6t^{-1}+6-3t+t^{2}$$
and
$$ \tr(\Phi(\sigma_{1}^{k}B),t) = \left(\sum\limits_{m=0}^{k} (-t)^{m}\right)(1-t) + 3-2t^{-1}-2t+t^{2}$$
The largest power of $t$ appearing in $\tr(\Phi(\sigma_{1}^{k}A))$ is $k+4$, and the largest power appearing in $\tr(\Phi(\sigma_{1}^{k}B))$ is $2$ if $k=0,1$ and $k$ if $k > 1$.  
\vspace{1pt}

\noindent\textbf{Example 2:} See Figure \ref{menasco}.

The braids $$(\sigma_{1})^{k}\sigma_{2}^{-1}\sigma_{3}\sigma_{2}\sigma_{3}^{-1}\sigma_{2}(\sigma_{1}^{-1})^{k}\sigma_{4}^{-1}\sigma_{2}^{-1}\sigma_{3}\sigma_{2}^{-1}\sigma_{3}^{-1}\sigma_{2}\sigma_{4}$$ and $$(\sigma_{1})^{k}\sigma_{2}^{-1}\sigma_{3}\sigma_{2}\sigma_{3}^{-1}\sigma_{2}(\sigma_{1}^{-1})^{k}\sigma_{4}\sigma_{2}^{-1}\sigma_{3}\sigma_{2}^{-1}\sigma_{3}^{-1}\sigma_{2}\sigma_{4}^{-1}$$ are related by a mutation and their $SKh$ differ for all $k \geq 1$.

We prove the result by showing that the traces of the Burau matrices of these two braids are different when we set $t = -1$.  First, it can be easily checked that the form for $(\sigma_{1})^{k}$ when $t = -1$ is:

$$\left(\begin{matrix} k+1 & -k & 0 & 0 & 0 \\ k & -k+1 & 0 & 0 & 0 \\ 0 & 0 & 1 & 0 & 0 \\ 0 & 0 & 0 & 1 & 0 \\ 0 & 0 & 0 &0&1\end{matrix}\right)$$
and the form for $(\sigma_{1}^{-1})^{k}$ when $t=-1$ is:

$$\left(\begin{matrix} 1-k & k & 0 & 0 & 0 \\ -k & k+1 & 0 & 0 & 0 \\ 0 & 0 & 1 & 0 & 0 \\ 0 & 0 & 0 & 1 & 0 \\ 0 & 0 & 0 &0&1\end{matrix}\right)$$

It can also be readily checked (for example, using Mathematica), that when $t = -1$,  the Burau matrices for $X= \sigma_{2}^{-1}\sigma_{3}\sigma_{2}\sigma_{3}^{-1}\sigma_{2}$, $Y = \sigma_{4}^{-1}\sigma_{2}^{-1}\sigma_{3}\sigma_{2}^{-1}\sigma_{3}^{-1}\sigma_{2}\sigma_{4}$, and $Z = \sigma_{4}\sigma_{2}^{-1}\sigma_{3}\sigma_{2}^{-1}\sigma_{3}^{-1}\sigma_{2}\sigma_{4}^{-1}$ are as follows:

$$ X = \left(\begin{matrix} 1 & 0 & 0 & 0 & 0 \\ 0 & 5 & -2 & -2 & 0 \\ 0 & 6 & -2 & -3 & 0 \\ 0 & 2 & -1 & 0 & 0 \\ 0 & 0 & 0 & 0 & 1 \end{matrix}\right)$$
$$ Y = \left(\begin{matrix} 1 & 0 & 0 & 0 & 0 \\ 0 & -3 & 2 & 4 & -2 \\ 0 & -6 & 4 & 6 & -3 \\ 0 & 0 & 0 & 1 & 0 \\ 0 & 2 & -1 & -2 & 2 \end{matrix}\right)$$
$$ Z = \left(\begin{matrix} 1 & 0 & 0 & 0 & 0 \\ 0 & -3 & 2 & 0 & 2 \\ 0 & -6 & 4 & 0 & 3 \\ 0 & -4 &2 & 1 & 2 \\ 0 & -2 & 1 & 0 & 2 \end{matrix}\right)$$

It can be calculated that $\tr(\Phi(\sigma_{1}^{k})X\Phi(\sigma_{1}^{-k})Y) = 6 + 8k + 16k^{2}$ and $\tr(\Phi(\sigma_{1}^{k})X\Phi(\sigma_{1}^{-k})Z) = 6 - 8k+16k^{2}$.  So for all $k \geq 1$, the traces of the Burau matrices are distinct.

\end{proof}

The negative flype examples mentioned in the note in the Introduction are also a direct calculation: their traces under the Burau representation differ as well.

\footnotesize
\bibliographystyle{amsalpha}
\bibliography{SKh}
\end{document}